\newtheorem{theorem}{Theorem}[section]
\newtheorem{corollary}[theorem]{Corollary}
\newtheorem{lemma}[theorem]{Lemma}
\newtheorem{proposition}[theorem]{Proposition}
\newcommand{\indicator}[1]{\mathbb{I}_{#1}}
\newcommand\blfootnote[1]{%
  \begingroup
  \renewcommand\thefootnote{}\footnote{#1}%
  \addtocounter{footnote}{-1}%
  \endgroup
}
\title
{On the  number of crossings in a random labelled tree with vertices in convex position}
\author{Octavio Arizmendi, Pilar Cano and Clemens Huemer}
\begin{document}
\maketitle
\begin{abstract}
We prove that the number of crossings in a random labelled tree with vertices in convex position is asymptotically Gaussian  with mean $ n^2/6$ and variance $ n^3/45$. A similar result is proved for points in general position under mild constraints.
\end{abstract}
\blfootnote{\begin{minipage}[l]{0.3\textwidth} \includegraphics[trim=10cm 6cm 10cm 5cm,clip,scale=0.15]{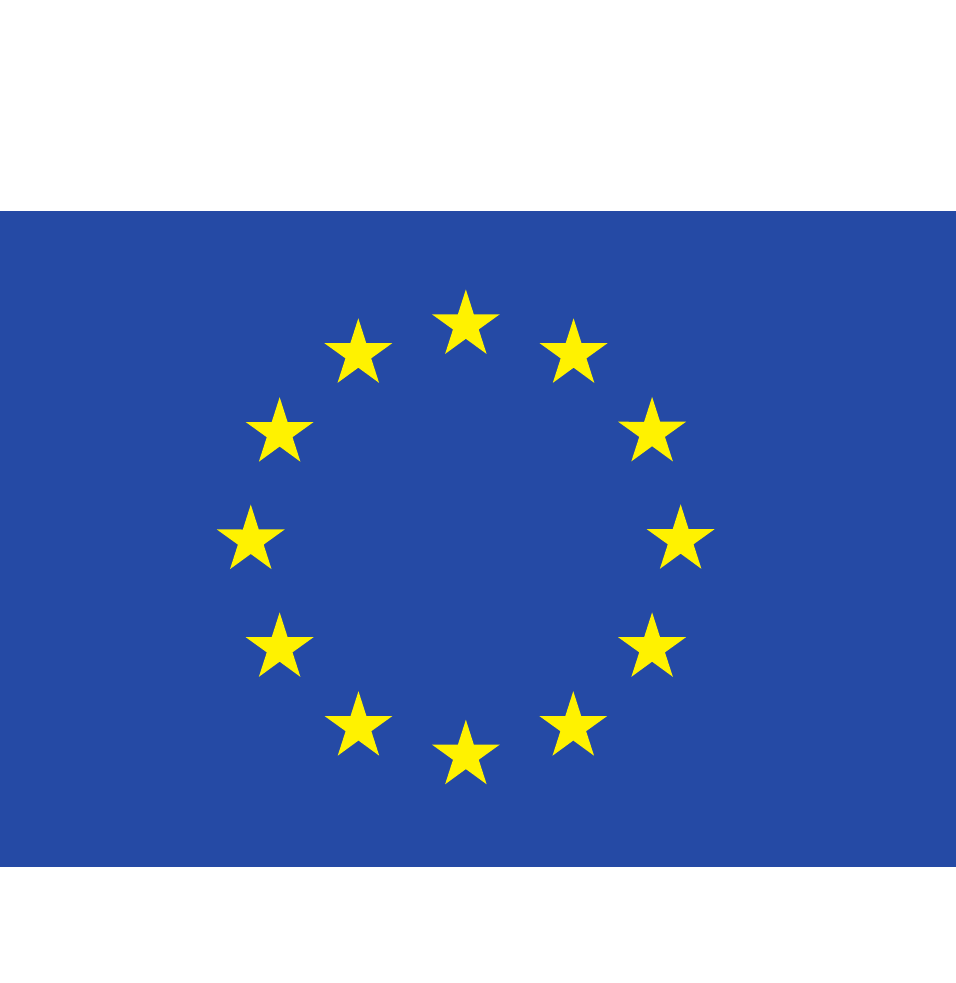} \end{minipage}
 \hspace{-2cm} \begin{minipage}[l][1cm]{0.82\textwidth}
 	  This project has received funding from the European Union's Horizon 2020 research and \\ innovation programme under the Marie Sk\l{}odowska-Curie grant agreement No 734922.
		 	\end{minipage}
			C.H. was supported by projects MINECO MTM2015-63791-R and Gen.Cat. DGR 2017SGR1336.}

\section{Introduction}\label{intro}

Asymptotic poperties of random graphs have been broadly studied since the beginning of the theory. The celebrated papers of Erd\H{o}s and R\'enyi \cite{ER1,ER2,ER3} give the connectivity properties of the now called Erd\H{o}s-R\'enyi model $G(n,M)$ and the Erd\H{o}s-R\'enyi-Gilbert model \cite{Gilbert}, $G(n,p)$. Nowadays, many properties are known for these and other important models, such as the configuration model \cite{Bollobas1980} and the Albert Barab\'asi model \cite{AB}. These include degree distributions \cite{bollobas2001, bollobas1981degree}, number of cycles \cite{wormald1981}, and spectral properties \cite{mckay1981, wigner1958}, among others.  We will not try to give an extensive literature but rather refer to the monographs of Bollob\'as \cite{BollobasBook} and the more recent one by Janson, Luczak and Rucinski \cite{SLR}.

  This paper is concerned with geometric properties of random graphs. Namely, we are interested in  the number of crossings of a rectilinear drawing of a random graph.  A \emph{drawing} of a graph $G=(V,E)$ is a set of points in the plane representing its vertices $V$ and for each pair of vertices $a$ and $b$, a simple continuous arc, denoted $(a,b)$, represents the edge in $E$ connecting the corresponding pair of points. We say that the drawing is \emph{rectilinear} if the edges are represented by the straight segment joining $a$ with $b$.  If a pair of edges in a (rectilinear) drawing of $G$ intersect in an interior point, then the intersection point is defined as \emph{a crossing point} or a (resp. \emph{rectilinear}) \emph{crossing} of such drawing. There are not as many known results of geometric properties such as crossings in random graphs as combinatorial ones. However,  already in the mid 60's, Moon  \cite{moon1965} proved the asymptotic normality of the number of crossings of a complete graph embedded randomly in a sphere. 
  
    Another interest in understanding the crossings for random graphs comes when studying the \emph{crossing number}.  The crossing number of a graph $G$ is the minimum number of edge crossings among all the possible drawings of $G$. Computing the crossing number of a graph is known to be difficult, even for complete graphs. In fact, it has been proven by Garey and Johnson~\cite{garey} that knowing whether the crossing number of a graph $G$ is at least a constant $k$ is \textbf{NP}-complete.  An approach for solving the crossing number type problems, starting from the well known Crossing Lemma~\cite{ajtai1982crossing}, is computing the expected value of crossings in random graphs \cite{mohar2011expected}.   Interesting results regarding the crossing number can be found for instance in the papers of Pach and T{\'o}th \cite{pach1997graphs}, Spencer~\cite{Spencer}, and Spencer and T{\'o}th \cite{ST}.

In this paper we consider the number of crossings of a uniform random labelled tree with vertices in convex position. Our approach is combinatorial by using the method of moments (See Lemma \ref{MomentMethod}). Our main technical tool is the combinatorial approach to cumulants as we describe in Section 2. 

Our main theorem, Theorem \ref{T1} below, is very similar in flavor to the result given by Flajolet and Noy~\cite{FN}, where they consider a random perfect matching on $2n$ points in convex position. They show that the number of rectilinear crossings of this random matching follows asymptotically a Gaussian distribution. However their method of proof is quite different since they use an analytical approach based on an ad-hoc integral representation for a $q$-series. We must mention that they hint of some combinatorial aspects for the first moments,  but do not continue this direction for all moments, because in the words of Flajolet and Noy, ``the combinatorics for higher moments soon become intractable".  In the case of Theorem \ref{T1}, keeping track of the moments directly seems also to be unfeasible. However,  in this case, it turns out that analyzing the asymptotic behavior of cumulants is possible.

The main theorem of this paper is the following.

\begin{theorem} \label{T1}
Let, for each $n$, denote by $S_n$ a set of $n$ points in convex position in the plane. Then as $n$ tends to infinity, the number of (rectilinear) edge crossings of a random tree drawn at random on $S_n$, $X_n$, approaches a normal distribution with mean $\approx n^2/6$ and variance $\approx n^3/45$. 
In other words $$\frac{X_n-\frac{n^2}{6}}{\sqrt{n^3/45}}\to N(0,1)$$
in distribution.\end{theorem}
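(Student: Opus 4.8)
The plan is to reduce the geometric problem to a purely combinatorial one, and then run the method of cumulants referenced in Lemma~\ref{MomentMethod}. Since the points of $S_n$ are in convex position, two segments cross if and only if their four endpoints interleave along the convex hull; hence crossing depends only on the cyclic labelling, and we may take $S_n$ to be the vertices $1,\dots,n$ of a convex polygon. Index by $\alpha$ the pairs $\{e_\alpha,f_\alpha\}$ of vertex-disjoint edges whose endpoints interleave (the \emph{crossing pairs}); there are exactly $\binom{n}{4}$ of them, one per $4$-subset. Writing $Y_\alpha=\indicator{e_\alpha\in T}\,\indicator{f_\alpha\in T}$ for a uniform random labelled tree $T$, we have $X_n=\sum_\alpha Y_\alpha$. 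The first fact I would extract is the generalized Cayley formula: for a forest $F$ with component sizes $a_1,\dots,a_m$ (isolated vertices counted as size $1$), the number of spanning trees of $K_n$ containing $F$ is $n^{m-2}\prod_i a_i$, so that
$$\Pro(F\subseteq T)=\frac{\prod_i a_i}{n^{|F|}}.$$
This has the crucial consequence that edge-indicators supported on disjoint vertex sets are \emph{exactly independent}, since both the component-size products and the exponents factorize. In particular $\Pro(e,f\in T)=4/n^2$ for disjoint $e,f$, giving $\esp[X_n]=\binom{n}{4}\cdot\frac{4}{n^2}\sim n^2/6$.

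Next I would compute cumulants. By multilinearity, $\kappa_r(X_n)=\sum_{\alpha_1,\dots,\alpha_r}\kappa(Y_{\alpha_1},\dots,Y_{\alpha_r})$. The independence property above is decisive: a joint cumulant vanishes as soon as its arguments split into two groups supported on disjoint vertex sets, so only \emph{connected} $r$-tuples of crossing pairs (connected through shared vertices) contribute. For such a tuple let $v$ be the number of distinct vertices and $s$ the number of distinct edges in the union. Expanding $\kappa$ through the moment--cumulant (M\"obius) formula and using $\Pro(F\subseteq T)=\Theta(n^{-|F|})$ on each block, every term is $O(n^{-s})$, whence $\kappa(Y_{\alpha_1},\dots,Y_{\alpha_r})=O(n^{-s})$; meanwhile the number of tuples with a given $v$ is $O(n^{v})$. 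Each crossing pair carries $4$ vertices and $2$ edges, and every identification needed to connect the tuple (sharing a vertex, or sharing an edge) lowers $v-s$ by at least one; connecting $r$ pairs needs at least $r-1$ identifications, so $v-s\le 2r-(r-1)=r+1$. Summing over the finitely many shapes gives
$$\kappa_r(X_n)=O\!\left(n^{\,r+1}\right).$$

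For the variance I would extract the constant by isolating the two shapes realizing $v-s=r+1=3$ at $r=2$: crossing pairs sharing a single edge ($v=6$, $s=3$, covariance $\tfrac{8}{n^3}+o(n^{-3})$) and crossing pairs sharing a single vertex ($v=7$, $s=4$, covariance $-\tfrac{4}{n^4}+o(n^{-4})$). Counting each shape among the convex positions and multiplying by its covariance, all other shapes being $O(n^2)$, should yield $\kappa_2(X_n)=\mathrm{Var}(X_n)\sim n^3/45=:\sigma_n^2$. Combined with $\esp[X_n]\sim n^2/6$, the previous bound gives, for every $r\ge3$,
$$\frac{\kappa_r(X_n)}{\sigma_n^{\,r}}=O\!\left(\frac{n^{\,r+1}}{n^{3r/2}}\right)=O\!\left(n^{\,1-r/2}\right)\to0,$$
so all standardized cumulants of order $\ge3$ vanish asymptotically while the first two converge to those of $N(0,1)$. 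Applying Lemma~\ref{MomentMethod} (the method of cumulants) to $(X_n-\esp[X_n])/\sigma_n$ concludes the proof of Theorem~\ref{T1}.

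I expect the main obstacle to be twofold. First, turning the heuristic $\kappa=O(n^{-s})$ into a rigorous statement requires verifying that the moment--cumulant expansion never produces a block whose forest carries a cycle (probability $0$) and organizing the sum over connected shapes — exactly the combinatorial bookkeeping that Flajolet and Noy found intractable for moments but that becomes tractable for cumulants, precisely because the vanishing-on-independent-blocks property prunes all but the connected configurations. Second, pinning down the exact variance constant $1/45$ — rather than merely the order $n^3$ — demands the careful enumeration of the share-an-edge and share-a-vertex configurations in convex position together with their precise covariances, including checking that the competing positive and negative contributions combine to a positive leading coefficient.
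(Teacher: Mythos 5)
Your proposal is correct, and its core coincides with the paper's own proof of Theorem~\ref{T1}: both rest on the generalized Cayley formula $\Pro(F\subseteq T)=n^{-|F|}\prod_i a_i$ (the paper proves it via Pitman's double counting), deduce independence of indicators on disjoint vertex sets, and then run the same cumulant estimate — multilinearity, pruning of tuples by the vanishing property, the bound $v-s\le r+1$ obtained by adding the crossing pairs one at a time, the count $O(n^{v})$ of tuples on $v$ vertices, and the conclusion $\kappa_r(X_n)=O(n^{r+1})=o(\sigma_n^{r})$ for $r\ge 3$. There are two genuine differences worth recording. First, the variance: the paper computes $\mathrm{Var}(X_n)$ \emph{exactly}, by arguing that $\esp(X_n^2)=\sum_{i=-4}^{4}a_i n^{i}$ and solving for the nine coefficients from computer-generated values for $n\le 10$ (its Tables 1 and 2); you instead extract only the leading coefficient by enumerating the two shapes with $v-s=3$. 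Your outline does close, and to be complete you should finish the two counts you leave as ``should yield'': per $6$-subset there are $12$ ordered edge-sharing pairs, each with covariance $8/n^3-16/n^4$, and per $7$-subset there are $140$ ordered vertex-sharing pairs, each with covariance exactly $-4/n^4$, giving $\left(\tfrac{12\cdot 8}{720}-\tfrac{140\cdot 4}{5040}\right)n^3=\left(\tfrac{2}{15}-\tfrac{1}{9}\right)n^3=n^3/45$. Your route is computer-free and self-contained, at the price of yielding only the asymptotic variance — which is all the theorem needs, since replacing $\mu_n,\sigma_n$ by $n^2/6,\sqrt{n^3/45}$ perturbs the normalized variable by $o(1)$. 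Second, your form of the vanishing property (the joint cumulant dies when the arguments split into two mutually independent \emph{groups}, so only \emph{connected} tuples survive) is the rigorous one; the paper's Property~(\ref{eq:prop4}) as literally stated — independence of two single entries kills the cumulant — is false in general (take $Z_1,Z_2$ independent $\pm 1$ signs and $Z_3=Z_1Z_2$; then $C_3(Z_1,Z_2,Z_3)=1$), and its consequent claim that nonvanishing forces \emph{any} two $W_{i_j}$ to share a point is stronger than what actually follows. Since the paper's Step 1 only uses connectivity (each $W_{i_j}$ meeting the union of the previous ones, after reordering), its proof is unaffected, but your formulation is the one that should be cited.
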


 Moreover,  the method of proof used in Theorem \ref{T1} can be easily generalized to points which may not be in convex position, provided some constraints on the asymptotic behavior of the \emph{rectilinear crossing number} $\overline{{cr}}$ of the set of points is satisfied (see Section 4, for definition of $\overline{{cr}}$). 
 
\begin{theorem}\label{T2}
Let $\{S_n\}_{n=1}^{\infty}$ be a sequence of point sets in general position in the plane, with $|S_n|=n$, such that  $\lim_{n \rightarrow \infty} \frac{\overline{{cr}}(S_n)}{{{n} \choose {4}}}$ exists. Then, as $n$ tends to infinity, the number of edge crossings of a random spanning tree drawn at random on $S_n$, approaches a normal distribution with mean $\frac{4\overline{{cr}}(S_n)}{n^2}$ and variance $O(n^3)$. 
\end{theorem}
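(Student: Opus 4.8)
The plan is to run the cumulant argument behind Theorem \ref{T1} and to isolate the single place where convexity was used. Write the crossing count as a sum of indicators over the crossing pairs of the complete geometric graph on $S_n$. Every unordered pair of segments that cross corresponds to a unique $4$-subset of $S_n$ in convex position (the two crossing segments being its diagonals), and conversely each convex $4$-subset yields exactly one crossing pair. Hence, letting $\mathcal{Q}_n$ be the family of convex $4$-subsets, we have $|\mathcal{Q}_n|=\overline{cr}(S_n)$, and setting $Y_Q=\indicator{\text{both diagonals of }Q\text{ lie in }T}$ for the uniform random tree $T$, we obtain $X_n=\sum_{Q\in\mathcal{Q}_n}Y_Q$. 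The probabilistic ingredients are position-independent: for a uniform labelled tree, a prescribed set of edges forming a forest with component sizes $a_1,\dots,a_k$ lies in $T$ with probability $n^{k-2}\prod_i a_i/n^{n-2}$ (the generalized Cayley formula). In particular, a single edge lies in $T$ with probability $2/n$ and two disjoint edges with probability $4/n^2$, so $\esp[X_n]=|\mathcal{Q}_n|\cdot 4/n^2=4\,\overline{cr}(S_n)/n^2$, which is the asserted mean.

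By multilinearity of cumulants, $\kappa_r(X_n)=\sum_{Q_1,\dots,Q_r}\kappa(Y_{Q_1},\dots,Y_{Q_r})$, and the key observation is that each joint cumulant depends on $S_n$ only through how the diagonals of $Q_1,\dots,Q_r$ overlap, never through the actual coordinates; its numerical value is dictated by the generalized Cayley formula exactly as in the proof of Theorem \ref{T1}. Consequently the entire combinatorial apparatus transfers verbatim, the only change being that the index set is now $\mathcal{Q}_n$ rather than all $\binom{n}{4}$ quadruples. By the method of moments (Lemma \ref{MomentMethod}) it suffices to prove $\kappa_2(X_n)=\Theta(n^3)$ and $\kappa_r(X_n)=o\big(\kappa_2(X_n)^{r/2}\big)$ for every fixed $r\ge 3$.

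For the variance I would classify pairs $(Q_1,Q_2)$ by the overlap of their diagonals. A crucial simplification in $K_n$ is that, by symmetry, the transfer current between two disjoint edges vanishes, so pairwise-disjoint edges are mutually independent in the uniform spanning tree; hence if the four diagonals of $Q_1$ and $Q_2$ are pairwise disjoint, then $\mathrm{Cov}(Y_{Q_1},Y_{Q_2})=0$ exactly. The order-$n^3$ contributions therefore come only from overlapping configurations. When $Q_1,Q_2$ share a full diagonal $e$ the three surviving edges are pairwise disjoint, giving $\mathrm{Cov}=8/n^3-16/n^4\sim 8/n^3$; writing $t(e)$ for the number of segments crossing $e$, these pairs contribute on the order of $n^{-3}\sum_e t(e)^2$, and since $\sum_e t(e)=2\,\overline{cr}(S_n)=\Theta(n^4)$ while $t(e)\le n^2/4$, Cauchy--Schwarz and the trivial bound give $\sum_e t(e)^2=\Theta(n^6)$. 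Pairs sharing only a vertex of their diagonals yield covariance $\sim -4/n^4$ with $\Theta(n^7)$ such pairs, again a term of order $n^3$. Summing all overlap types gives $\mathrm{Var}(X_n)=O(n^3)$, as claimed.

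It remains to bound the higher cumulants: $\kappa(Y_{Q_1},\dots,Y_{Q_r})$ vanishes unless $Q_1,\dots,Q_r$ are connected through shared diagonals, and the generalized Cayley estimates bound each surviving term, so that counting connected $r$-clusters of convex quadrilaterals yields $\kappa_r(X_n)=O(n^{r+1})$ and hence $\kappa_r(X_n)/\kappa_2(X_n)^{r/2}=O(n^{1-r/2})\to 0$ for $r\ge 3$, giving normality. I expect two delicate points to be the main obstacle. First, establishing that $\mathrm{Var}(X_n)=\Theta(n^3)$ rather than of lower order: the positive edge-sharing contribution and the negative vertex-sharing contribution are \emph{both} of order $n^3$, so one must show their sum does not cancel; this is precisely where the hypothesis that $\lim_{n\to\infty}\overline{cr}(S_n)/\binom{n}{4}$ exists is used, guaranteeing that the relevant crossing densities converge and that the normalized variance tends to a positive limit, so the standardization is legitimate. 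Second, one must verify the uniform bound $\kappa_r(X_n)=O(n^{r+1})$ in general position, which requires combining the position-independent cumulant estimates for spanning-tree edge indicators with the fact that the overlap counts of crossing quadrilaterals do not exceed their convex-position orders.
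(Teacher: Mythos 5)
Your proposal follows essentially the same route as the paper: the forest-containment probabilities (formula~(\ref{eq:prob})) are position-independent, so the mean computation, the classification of pairs of crossing quadrilaterals by overlap for the variance, and the cumulant bound $C_k(X_n)=O(n^{k+1})$ of Section~\ref{sec:cumulants} all transfer to general position with $\binom{n}{4}$ replaced by the set of convex quadruples; this is exactly what the paper does. Two points of comparison. First, your appeal to vanishing transfer currents is heavier machinery than needed, and note that pairwise zero correlation of edges would not by itself give independence of the two \emph{products} $Y_{Q_1},Y_{Q_2}$; one either needs the full determinantal (Burton--Pemantle) structure, or, as the paper does, simply the factorization in~(\ref{eq:prob}) (Corollary~\ref{cor:indep}), which directly yields independence of vertex-disjoint forests. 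Second, your ``first delicate point'' is real but is not resolved by the paper either: for the standardized variable to be asymptotically normal one needs $\mathrm{Var}(X_n)$ bounded \emph{below} (of order roughly $n^3$, so that $\sigma^k$ beats $O(n^{k+1})$ for all $k\ge 3$), whereas the paper only proves the upper bound $O(n^3)$ and words Theorem~\ref{T2} accordingly. However, your claim that the hypothesis $\lim_{n\to\infty}\overline{{cr}}(S_n)/\binom{n}{4}$ exists ``guarantees that the normalized variance tends to a positive limit'' is not substantiated and is not how the paper proceeds: the two competing $n^3$-order terms are governed by six- and seven-point statistics (essentially $\sum_e t(e)^2$ and the number of one-vertex-sharing pairs of convex quadruples), and convergence of the four-point density $\overline{{cr}}(S_n)/\binom{n}{4}$ does not imply convergence of these quantities, nor that their combination stays bounded away from zero. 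So your write-up is no weaker than the paper's own proof, but that step should be flagged as an open gap (in both) rather than attributed to the stated hypothesis.
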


We expect that the analogs of Theorem \ref{T1} and Theorem \ref{T2} also hold more generally. However, it is not straightforward to modify our methods  to other families of graphs.
\section{Preliminaries on Moments and Cumulants}

We first explain the necessary background on moments and cumulants. 

\subsection{Moments}

A family of random variables $\{X_i\}_{i=1}^n$ in a probability space $(\Omega,\mathcal{F},\mathbb{P})$ is said to have finite moments, if $\mathbb{E}[|X_i^{k}|]<\infty$, $k\in\mathbb{N}$.
  
A sequence of random variables $\{Y_n\}_{n>0}$ with finite moments is said to converge $\emph{in moments}$ to $Y$, as $n\to\infty$, if
$$\mathbb{E}[Y_n^k]\to\mathbb{E}[Y^k] , \text{for all }k\geq0.$$

A random variable $X$ is said to be determined by moments if $E[X^n]=E[Y^n]$ for all $n$ implies that $X$ and $Y$ have the same distribution. 

In this paper we will consider the convergence to a standard normal (or gaussian) random variable $Z$. Since the moments of $Z$ are given by  $E[Z^{2n+1}]=0$ and $E(Z^{2n})=(1)(3)\cdots (2n-1)$, then $Z$ is determined by moments, which may be verified by Carleman's criterion.

The importance of determination by moments is the following well known lemma, known as the \emph{Method of moments}.
\begin{lemma}[Method of moments] \label{MomentMethod}
Let X be a random variable which is determined by moments and let $\{X_n\}$ be sequence of random variables with finite moments. If $X_n\to X$ in moments,  then $X_n\to X$ in distribution.
\end{lemma}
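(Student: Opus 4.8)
The plan is to deduce convergence in distribution from convergence of all moments by combining a tightness argument with the determinacy hypothesis and a standard subsequence principle. Throughout, write $\mu_n$ for the law of $X_n$ and $\mu$ for the law of $X$; the goal is to show $\mu_n \to \mu$ weakly.

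First I would establish tightness of $\{\mu_n\}$. Since $\mathbb{E}[X_n^2] \to \mathbb{E}[X^2] < \infty$, the second moments are uniformly bounded, say $\sup_n \mathbb{E}[X_n^2] \le C$. Markov's inequality then gives $\mathbb{P}(|X_n| > M) \le C/M^2$ uniformly in $n$, so for every $\varepsilon > 0$ one can choose $M$ with $\mu_n([-M,M]) \ge 1 - \varepsilon$ for all $n$; hence $\{\mu_n\}$ is tight. By Prokhorov's theorem (equivalently, Helly's selection theorem), every subsequence of $\{\mu_n\}$ admits a further subsequence converging weakly to some probability law $\nu$.

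Next I would identify every such subsequential limit with $\mu$. Fix a subsequence with $\mu_{n_k} \to \nu$ weakly, and let $Y$ be a random variable with law $\nu$. The key point is to upgrade weak convergence to convergence of individual moments: for each fixed $m$, the family $\{X_{n_k}^m\}_k$ is uniformly integrable because $\sup_k \mathbb{E}[|X_{n_k}|^{2m}] = \sup_k \mathbb{E}[X_{n_k}^{2m}] < \infty$, the $2m$-th moments being convergent and hence bounded. Uniform integrability together with weak convergence yields $\mathbb{E}[Y^m] = \lim_k \mathbb{E}[X_{n_k}^m] = \mathbb{E}[X^m]$, the last equality being the hypothesis $X_n \to X$ in moments. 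Thus $Y$ and $X$ share all moments, and since $X$ is determined by its moments, $\nu = \mu$.

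Finally I would invoke the subsequence principle: having shown that every subsequence of $\{\mu_n\}$ has a further subsequence converging weakly to the same limit $\mu$, one concludes $\mu_n \to \mu$ weakly, i.e. $X_n \to X$ in distribution. I expect the main obstacle to be the middle step, since weak convergence alone does not transfer moments to the limit; the uniform-integrability argument, powered by the boundedness of the higher (even) moments that follows from the assumed moment convergence, is precisely what validates the transfer and is the crux of the proof.
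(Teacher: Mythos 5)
Your proof is correct. Note, however, that the paper does not actually prove this lemma: it is stated as a well-known fact (with the method-of-moments machinery taken as given), so there is no ``paper proof'' to compare against; what you have written is the standard textbook argument (as in Billingsley's \emph{Probability and Measure}, Theorem 30.2) that the authors implicitly rely on. Each step of yours checks out: tightness follows from the uniform bound on second moments (convergent sequences of reals are bounded) plus Chebyshev's inequality; Prokhorov/Helly then gives subsequential weak limits; for the transfer of moments to a subsequential limit $Y$, the $L^2$-boundedness of $\{X_{n_k}^m\}$ (via the bounded $2m$-th moments) gives uniform integrability, and combined with $X_{n_k}^m \Rightarrow Y^m$ (continuous mapping theorem, which you use implicitly and could state explicitly) this yields $\mathbb{E}[Y^m]=\lim_k \mathbb{E}[X_{n_k}^m]=\mathbb{E}[X^m]$; determinacy of $X$ then forces $\nu=\mu$, and the subsequence principle (valid since weak convergence is metrizable, or by applying the sub-subsequence argument to each $\int f\,d\mu_n$ for $f$ bounded continuous) closes the argument. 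You correctly identify the uniform-integrability step as the crux: weak convergence alone does not transfer moments, and this is exactly where the hypothesis of convergence of \emph{all} moments, not just those of fixed order, is needed.
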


     The important notion of independence can be characterized with the use of \emph{joint} moments.  For a family of random variables $\{X_i\}_{i=1}^n$  with finite moments,  the joint moments are the quantities 
$$\mathbb{E}[X_{i_1}X_{i_2} \cdots X_{i_n}]$$ 
which by H\"older's inequality are finite too.

   The random variables $\{X_i\}_i^n$ are independent  if, for any $n_1,n_2,\cdots,n_k \in \mathbb{Z}^+$, the following factorization of moments holds, 
$$\mathbb{E}[X_1^{n_1}X_2^{n_2}  \cdots X_n^{n_k}]=\mathbb{E}[X_1^{n_1}]\mathbb{E}[X_2^{n_2}]\cdots\mathbb{E}[  X_n^{n_k}].$$

\subsection{Cumulants}
 
While the moments are very useful, in this paper we will rather use a variant  of them which, known as cumulants, behaves better when considering sums of independent random variables as will be the case for us.  For a combinatorial approach to cumulants, see  \cite{rota2000combinatorics} and references therein.

In order to do this we need to use partitions $P(n)$.  We call $\pi =\{V_{1},...,V_{r}\}$ a \textbf{partition }of the set $[n]:=\{1, 2,\dots, n\}$
if $V_{i}$ $(1\leq i\leq r)$ are pairwise disjoint, non-void
subsets of $[n]$, such that $V_{1}\cup V_{2}...\cup V_{r}=\{1, 2,\dots, n\}$. We call $
V_{1},V_{2},\dots,V_{r}$ the \textbf{blocks} of $\pi $. The number of blocks of 
$\pi $ is denoted by $\left\vert \pi \right\vert $.


Joint cumulants of $n$ random variables $Z_1,\ldots, Z_n$ denoted by $\{C_k(Z_{i_1},\ldots,Z_{i_k})\}^\infty_{k=1},$  for $\{i_1,...\i_n\}\in[n]^k$, are defined implicitly by the moment-cumulants formula
$$ \mathbb{E}[Z_{i_1}\cdots Z_{i_n}]=\sum_{\pi\in P(n)} C_\pi (Z_1,Z_2,\dots,Z_n),$$
which is, by M\"obius Inversion, equivalent to
$$C_k\left(Z_{i_1},Z_{i_2}, \ldots, Z_{i_k}\right) = \sum_{\pi}(|\pi|-1)!(-1)^{|\pi|-1}\prod_{B \in \pi} \mathbb{E}\left(\prod_{i_j \in B} Z_{i_j}  \right).$$
We denote by $C_k(Z)$ the univariate $k$-th cumulant of a random variable $Z$ given by 
\begin{equation}\label{eq:prop3a}
  C_k(Z):=C_k(Z,Z,\ldots,Z).
\end{equation}
For our purposes the precise formula for cumulants will not be needed but only the following properties of cumulants.
\begin{itemize}
\item (Invariance under shifts). For $k\geq 2$ and any constant $c$, 
\begin{equation}\label{eq:prop1}
  C_k(Z+c)=C_k(Z).
\end{equation}
\item (Homogeneity). For $k\geq 1$ and any constant $\lambda$, 
\begin{equation}\label{eq:prop2}
  C_k(\lambda Z)=\lambda^k C_k(Z)
\end{equation}

\item(Multilinearity). For all $k\geq1$
\begin{equation}\label{eq:prop3}
 C_k(Z_1,\ldots,X_i+Y_i,\ldots Z_n)= C_k(Z_1,\ldots,X_i,\ldots,Z_n)+C_k(Z_1,\ldots, Y_i,\ldots,Z_n)
\end{equation}

\item (Vanishing of mixed cumulants) If for some $1 \leq i,j \leq n$, $Z_i$ and $Z_j$ are \text{independent} random variables then
\begin{equation}\label{eq:prop4}
  C_k(Z_1,Z_2,\ldots,Z_n)=0. 
\end{equation}
\end{itemize}

Finally, the following well-known fact is essential for our proof: A random variable $X$ has normal distribution if and only if $C_k(X)=0$ for all $k \geq 3$.

\section{Asymptotic distribution of number of crossings in a random tree for point sets in convex position}\label{sec:cumulants}

In this section we give the proof of the main theorem. To do this we first recall some properties of the number of trees containing a fixed forest as a subgraph.

\subsection{Probability of containing a fixed forest}\label{sec:count}
Let $a_1, a_2, \ldots, a_p$ be fixed disjoint subtrees on a point set $S$ of size $n$.  Let $E$ be the set of edges defined by the forest $a_1, a_2, \ldots, a_p$.  Let $v_i$ be the number of vertices in sub-tree $a_i$ for all $i \in \{1, \ldots p\}$.

The following result is well known (see e.g. \cite{lovasz}). We give a proof for the convenience of the reader.
This proof is based on Pitman's technique \cite{Pitman} which consists of counting in two different ways sequences of trees containing the $a_1, a_2, \ldots, a_p$ subtrees. 
 \begin{proposition} Let $T(S)$ be the number of trees in $S$ containing the subtrees $a_1, a_2, \ldots a_p$. Then \begin{align*}T(S)=n^{n-|E|-2} \prod_{i=1}^p v_i.\end{align*} \end{proposition}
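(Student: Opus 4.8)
The plan is to prove the formula $T(S) = n^{n-|E|-2}\prod_{i=1}^p v_i$ by Pitman's double-counting technique, as the authors announce. I would count, in two different ways, the number of sequences of forests that start from the given forest $F = \{a_1, \ldots, a_p\}$ and successively add edges one at a time until a spanning tree of the $n$-point set is obtained. Each such sequence is an ordering of the edges added, and the total count will be expressed once as a product running over the steps of the construction and once as the number of target trees times the number of ways to order the added edges.

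First I would set up the construction. The forest $F$ has $p$ components (the subtrees $a_1, \ldots, a_p$) with $v_1, \ldots, v_p$ vertices respectively, and $|E| = \sum_{i=1}^p (v_i - 1) = n - p$ edges, so that to reach a spanning tree we must add exactly $p-1$ further edges. I would count \emph{refining sequences}: sequences $(e_1, e_2, \ldots, e_{p-1})$ of edges such that adding them one after another to $F$ keeps a forest at every stage and yields a spanning tree at the end. The key quantity is, at a stage where the current forest has $k$ components of sizes $w_1, \ldots, w_k$, the number of edges whose addition merges two distinct components, which is $\sum_{i<j} w_i w_j = \tfrac{1}{2}\bigl(n^2 - \sum_i w_i^2\bigr)$. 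The cleaner combinatorial fact, and the heart of Pitman's argument, is that if instead of counting edges we count \emph{component-rooted} configurations, the step counts telescope neatly.

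The decisive step, which I expect to be the main obstacle, is to choose the right weighting so that the two counts match. Concretely I would count pairs consisting of a spanning tree $T \supseteq F$ together with a choice of root in each of the $p$ components of $F$; equivalently, count sequences built by repeatedly selecting an ordered pair (a root vertex of one current component, and any vertex in a different component) to define the new edge and its orientation. On one hand, building up from $F$, at the step when there are $j$ components remaining one has $n \cdot (\text{number of available roots})$ choices in a way that makes the running product equal $\prod_{j} (\,\text{factor}\,) = n^{\,p-1}\,(p-1)!\,\prod_{i=1}^p v_i$ after accounting for the initial choice of roots; on the other hand, each target spanning tree $T$ is reached by exactly $(p-1)!$ of these sequences (the orderings of the $p-1$ added edges), with the root choices contributing the $\prod v_i$ factor. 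Equating the two expressions and cancelling the $(p-1)!$ and the root-choice factor $\prod v_i$ that appear on both sides will isolate $T(S)$; the delicate point is verifying that the per-step multiplicities really do telescope to a clean power of $n$, which is exactly where Pitman's insight on choosing vertices uniformly (so each step contributes a factor of $n$) does the work.

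I would close by a dimension check against the classical case: when $F$ is empty, $p = n$, $|E| = 0$, and each $v_i = 1$, the formula gives $n^{n-2}$, which is Cayley's formula and confirms the normalization. In the general case $|E| = n - p$, so the exponent $n - |E| - 2 = p - 2$ matches the $p-1$ added edges minus the overcounting, consistent with the telescoping above. The only genuinely nontrivial verification is the claim that every intermediate addition keeps the count a clean factor of $n$ independent of the current component sizes; this is the step I would write out most carefully, since it is precisely the subtlety that makes Pitman's double count succeed where a naive edge-by-edge count (whose per-step factor $\sum_{i<j} w_i w_j$ depends on the partition of sizes) would not.
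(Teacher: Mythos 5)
Your overall plan (Pitman's double count) is the paper's approach, and your forward count is correct: with $k$ current components of sizes $w_1,\ldots,w_k$, the number of admissible pairs (root of one component, vertex in a different component) is $\sum_{i}(n-w_i)=n(k-1)$, so the total number of runs of the algorithm is $n^{p-1}(p-1)!\prod_{i=1}^p v_i$ in your spanning setting. The genuine error is in the backward count. You claim that each spanning tree $T\supseteq F$ is reached by exactly $(p-1)!\prod_i v_i$ sequences, the $\prod_i v_i$ coming from free choices of a root in each component of $F$. That multiplicity is wrong: it is $n\cdot(p-1)!$. A run of the algorithm produces not just a tree but a \emph{rooted} tree (every edge oriented towards one final root), and the correct parametrization of runs is: (tree $T$, root $r$ of $T$ --- $n$ choices, ordering of the added edges --- $(p-1)!$ choices). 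Once $T$ and its final root $r$ are fixed, the initial roots of the components of $F$ are \emph{not} free parameters: each $a_i$ is forced to be rooted at its unique vertex closest to $r$ in $T$. Conversely, fixing initial roots and an ordering does not determine a unique run, because at a merging step where both endpoints of the new edge are currently roots there are two admissible moves (differing in which root survives), and the number of such coincidences depends on the ordering. So your asserted equivalence between runs and pairs (tree, roots of the components of $F$) fails.

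Concretely, your equation $T(S)\,(p-1)!\prod_i v_i = n^{p-1}(p-1)!\prod_i v_i$ yields $T(S)=n^{p-1}$ rather than the claimed $n^{p-2}\prod_i v_i$; in the base case $F=\emptyset$, $p=n$, it would give $n^{n-1}$ instead of Cayley's $n^{n-2}$. Your closing ``dimension check'' does not detect this because it plugs into the proposition's formula, not into the formula your double count actually produces. The repair is exactly the paper's bookkeeping: the backward count is $n\,(n-|E|-1)!\,T(S)$ (tree, root of the whole tree, ordering of added edges), and equating it with the forward count $n^{n-|E|-1}(n-|E|-1)!\prod_i v_i$ gives $T(S)=n^{n-|E|-2}\prod_i v_i$. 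A minor further point: you assume the subtrees cover all of $S$ (i.e.\ $|E|=n-p$), which the proposition does not require; this part is harmless, since each uncovered vertex can be treated as a singleton rooted subtree, changing neither $|E|$ nor $\prod_i v_i$ --- and this is precisely how the paper's proof handles such vertices.
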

 \begin{proof}
 First, choose one tree $T$ from the $T(S)$ possible trees containing $a_1, \ldots, a_p$ as subtrees. Now, choose one of the $n$ vertices from $S$ as a root, and from each vertex as a root we get $(n-|E|-1)!$ different sequences for adding the remaining edges of $T$ that do not belong to the fixed subtrees. Hence we have that there are  \begin{align}\label{eq:first}n(n-|E|-1)!T(S)\end{align} ways of choosing $T$.

 On the other hand, the sequence can be constructed as follows. Let the subtrees $a_1, a_2, \ldots a_p$ be fixed. Choose a root vertex for each subtree and orient each edge towards the root. Notice that for each subtree $a_i$ there are $v_i$ different orientations in order to get a rooted subtree. Thus, there are $\prod_{i=1}^p v_i$ different combinations of the fixed subtrees. Now, Pitman's algorithm starts with $n-|E|$ rooted subtrees, i.e., if a vertex $v$ from $S$ does not belong to one of the fixed subtrees, then $v$ itself is a rooted subtree with root $v$. In each step add a new oriented edge to the forest until a rooted tree is obtained in the following fashion. Assume there are $k$ trees and $n-k$ edges, then by choosing any of the $n$ vertices, say $v$, add an edge from $v$ to a root of the remaining $k-1$ rooted subtrees. Then, there are $n(k-1)$ ways of adding a new edge at each step. Therefore, the total number of choices for building a rooted subtree is \begin{align}\label{eq:second}\left(\prod_{k=2}^{n-|E|}n(k-1)\right)\left(\prod_{i=1}^pv_i\right)&=n^{n-|E|-1}(n-|E|-1)!\left(\prod_{i=1}^pv_i\right)\end{align}
 Now, using (\ref{eq:first}) and (\ref{eq:second}) it follows that, \begin{align*} T(S) = n^{n-|E|-2}\prod_{i=1}^p v_i, \end{align*} as desired.
 \end{proof}

 Since the number of labelled trees on an $n$ point set is given by $n^{n-2}$, the probability that a random tree contains a certain forest is the following,
  \begin{equation}\label{eq:prob}
 \mathbb{P}(a_1,a_2, \ldots, a_p)=\frac{n^{n-|E|-2}}{n^{n-2}} \prod_{i=1}^p v_i=n^{-|E|} \prod_{i=1}^p v_i
\end{equation}

Next corollary is an immediate consequence of~(\ref{eq:prob}).
 
\begin{corollary}\label{cor:indep}Consider a random tree $T(S)$ on an $n$ point set $S$ and let let $f_1$ and $f_2$ be two fixed forests on points of $S$ that share no vertices.
Let $\indicator{f_1}$ and $\indicator{f_2}$ be the indicator random variables 
of $T(S)$ containing $f_1$, respectively $f_2$.
Then, $\indicator{f_1}$ and $\indicator{f_2}$ are independent.
\end{corollary}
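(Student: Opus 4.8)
The plan is to derive independence directly from the probability formula~(\ref{eq:prob}), exploiting the fact that indicator variables are idempotent. Recall from Section~2 that independence of $\indicator{f_1}$ and $\indicator{f_2}$ amounts to the moment factorization $\esp[\indicator{f_1}^{n_1}\indicator{f_2}^{n_2}]=\esp[\indicator{f_1}^{n_1}]\,\esp[\indicator{f_2}^{n_2}]$ for all $n_1,n_2\ge 1$. Since $\indicator{f_j}^{m}=\indicator{f_j}$ for every $m\ge 1$, each such identity collapses to the single equation
$$\esp[\indicator{f_1}\indicator{f_2}]=\esp[\indicator{f_1}]\,\esp[\indicator{f_2}],$$
so it suffices to verify this one factorization.

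Next I would describe the combined forest. Write the subtrees of $f_1$ as having vertex counts $u_1,\dots,u_q$ and edge set $E_1$, and those of $f_2$ as having vertex counts $w_1,\dots,w_r$ and edge set $E_2$. The key observation is that, because $f_1$ and $f_2$ share no vertices, the union $f_1\cup f_2$ is itself a disjoint forest on $S$: its blocks are precisely the subtrees of $f_1$ together with those of $f_2$, and its edge set is $E_1\cup E_2$ with $|E_1\cup E_2|=|E_1|+|E_2|$. Moreover, the event that $T(S)$ contains both $f_1$ and $f_2$ is exactly the event that $T(S)$ contains $f_1\cup f_2$, so $\esp[\indicator{f_1}\indicator{f_2}]$ equals the probability that the random tree contains this combined forest.

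The computation then follows by applying~(\ref{eq:prob}) three times. It yields $\esp[\indicator{f_1}]=n^{-|E_1|}\prod_{i}u_i$ and $\esp[\indicator{f_2}]=n^{-|E_2|}\prod_{j}w_j$, while applying it to the combined forest gives
$$\esp[\indicator{f_1}\indicator{f_2}]=n^{-(|E_1|+|E_2|)}\Bigl(\prod_{i=1}^{q}u_i\Bigr)\Bigl(\prod_{j=1}^{r}w_j\Bigr).$$
Since the right-hand side is the product of the two single-forest probabilities, the required factorization holds and the independence follows.

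The argument is short precisely because~(\ref{eq:prob}) already encodes the relevant product structure. The only step needing care is the structural observation that vertex-disjointness makes $f_1\cup f_2$ a legitimate argument of~(\ref{eq:prob}): this is what guarantees that the exponent of $n$ splits additively and that the two vertex-count products do not interfere. Beyond this there is no real obstacle---once the union is recognized as a disjoint forest, the factorization is immediate from the multiplicative form of the formula.
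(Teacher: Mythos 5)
Your proof is correct and follows exactly the route the paper intends: the paper dismisses this corollary as ``an immediate consequence of~(\ref{eq:prob})'', and your argument is precisely that derivation spelled out --- vertex-disjointness makes $f_1\cup f_2$ a legitimate input to~(\ref{eq:prob}), the exponent of $n$ splits additively, and idempotence of indicators reduces independence to the single factorization $\esp[\indicator{f_1}\indicator{f_2}]=\esp[\indicator{f_1}]\,\esp[\indicator{f_2}]$. No gaps; this matches the paper's approach.
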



\subsection{Expectation and variance}\label{sec:expect}

Before proving the main theorem, we calculate 
the normalizing constants, then mean and variance.

That is, if we denote by  $X_n$ the random variable that counts the number of crossings in a random tree on $S$, we want to calculate  $\mathbb{E}(X_n)$  and  $Var(X_n)=\mathbb{E}(X_n^2)-\mathbb{E}(X_n)^2$.

For convenience we identify $S$ with the $n$ first positive integers, i.e. $S=[n]:=\{1,2,3,...,n\}$. In this sense, one crossing may be encoded by two edges $(a,b)$, $(c,d)$  with $a<c<b<d$.

With this in mind, let $\mathbb{I}_{ab}$ be the indicator of the event that edge $(a,b)$ appears in a tree. Thus, we write 
\begin{equation}\label{sum of indicators} X_n= \displaystyle\sum_{a<c<b<d} \indicator{ab}\indicator{cd}.\end{equation}

\begin{proposition}\label{prop:expect}The expectation of the number of crossings  in a random tree on $S$ is given by
\begin{align*} \mathbb{E}(X_n) = \frac{4}{n^2}{n \choose 4} = \frac{(n-1)(n-2)(n-3)}{6n}.\end{align*}
\end{proposition}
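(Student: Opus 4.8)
The plan is to compute $\mathbb{E}(X_n)$ directly from the representation~(\ref{sum of indicators}) by linearity of expectation, reducing the whole calculation to a single application of the probability formula~(\ref{eq:prob}). Writing
$$\mathbb{E}(X_n) = \sum_{a<c<b<d} \mathbb{E}\left[\indicator{ab}\indicator{cd}\right] = \sum_{a<c<b<d} \mathbb{P}\big((a,b) \text{ and } (c,d) \text{ both appear}\big),$$
I first observe that in each summand the two edges $(a,b)$ and $(c,d)$ involve four distinct points (since $a<c<b<d$), so together they form a forest consisting of $p=2$ disjoint subtrees, each a single edge. Thus $v_1 = v_2 = 2$ and the edge set has size $|E|=2$.

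Next I would invoke formula~(\ref{eq:prob}) with these parameters to evaluate each probability as
$$\mathbb{P}\big((a,b),(c,d)\big) = n^{-|E|}\prod_{i=1}^{2} v_i = n^{-2}\cdot 2\cdot 2 = \frac{4}{n^2},$$
which is the same constant for every admissible quadruple $a<c<b<d$. The expectation therefore factors as $\frac{4}{n^2}$ times the number of such quadruples.

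The remaining step is a counting argument: I would argue that each $4$-element subset $\{i<j<k<l\}$ of $[n]$ yields exactly one crossing configuration, namely the pair of ``diagonal'' edges $(i,k)$ and $(j,l)$, so that the set of index patterns $a<c<b<d$ is in bijection with the $4$-subsets of $[n]$. Hence there are precisely $\binom{n}{4}$ terms in the sum, giving
$$\mathbb{E}(X_n) = \binom{n}{4}\cdot\frac{4}{n^2} = \frac{4}{n^2}\binom{n}{4}.$$
Finally I would expand $\binom{n}{4} = \frac{n(n-1)(n-2)(n-3)}{24}$ and simplify to obtain the stated closed form $\frac{(n-1)(n-2)(n-3)}{6n}$. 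There is no genuine obstacle here; the only point requiring minor care is the bijection between crossing patterns and $4$-subsets, ensuring neither double-counting nor omission, but this is immediate since any four points in convex position admit a unique crossing pair of edges.
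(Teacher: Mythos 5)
Your proof is correct and follows essentially the same route as the paper's: linearity of expectation applied to the sum of indicators, the bijection between crossing pairs and $4$-element subsets giving $\binom{n}{4}$ terms, and formula~(\ref{eq:prob}) giving probability $\frac{4}{n^2}$ for each term. The only difference is that you spell out the parameters $|E|=2$, $v_1=v_2=2$ when invoking~(\ref{eq:prob}), which the paper leaves implicit.
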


\begin{proof}
There are ${n \choose 4}$ ways to choose four points $a,b,c,d$ from $S$, and for each such choice there is one product of indicator variables $\indicator{ab} \indicator{cd}$ with $a<c<b<d.$  By~(\ref{eq:prob}), $\mathbb{E}(\indicator{ab}\indicator{cd})=\mathbb{P}(\indicator{ab}\indicator{cd}=1) = \frac{4}{n^2}$.  The result follows by linearity of expectation.
\end{proof}


\begin{proposition}
$$
Var(X_n)=\frac{n^3}{45}-\frac{3n^2}{40}-\frac{17n}{72}+\frac{35}{24}-\frac{1003}{360n}+\frac{157}{60n^2}-\frac{1}{n^3}
$$
\end{proposition}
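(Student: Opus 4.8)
The plan is to compute $\mathbb{E}(X_n^2)$ directly and subtract $\mathbb{E}(X_n)^2$, exploiting the exact probability formula~(\ref{eq:prob}). Squaring the representation~(\ref{sum of indicators}) gives
\begin{equation*}
X_n^2=\sum_{\substack{a<c<b<d\\ a'<c'<b'<d'}}\indicator{ab}\indicator{cd}\indicator{a'b'}\indicator{c'd'},
\end{equation*}
so that $\mathbb{E}(X_n^2)$ is a sum, over all ordered pairs of crossings, of the probability that a random tree contains the (multiset) union of the four edges involved. The key point is that this probability depends only on the combinatorial structure of the forest formed by the edges $(a,b),(c,d),(a',b'),(c',d')$: by~(\ref{eq:prob}) it equals $n^{-|E|}\prod_i v_i$, where $|E|$ is the number of distinct edges and the $v_i$ are the sizes of the connected components they form (with the caveat that if the four edges contain a cycle the probability is $0$, since a tree is acyclic). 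So I would \emph{classify the pairs of crossings according to how the two edge-pairs overlap}.

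First I would organize the sum by the number of shared vertices and shared edges between the two crossings $\{(a,b),(c,d)\}$ and $\{(a',b'),(c',d')\}$. The cleanest decomposition is:
\begin{itemize}
\item the diagonal-type terms where the two crossings coincide as sets of edges, contributing $\mathbb{E}(\indicator{ab}\indicator{cd})$ summed over the $\binom{n}{4}$ crossings;
\item terms where the two crossings share exactly one edge;
\item terms where they share one vertex but no edge;
\item terms where the four edges are vertex-disjoint (i.e. on eight distinct points in two disjoint groups of four).
\end{itemize}
For the vertex-disjoint case, Corollary~\ref{cor:indep} guarantees the two indicator products are independent, so those terms contribute exactly $\mathbb{E}(X_n)^2$ minus the lower-order overlapping pieces; organizing it this way lets the dominant $\mathbb{E}(X_n)^2$ cancel against part of $\mathbb{E}(X_n^2)$, leaving only the overlapping contributions in the variance. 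For each non-disjoint class I would count the number of valid point-configurations (respecting the order constraints $a<c<b<d$ etc.) as an explicit polynomial in $n$ — essentially a sum over choices of $5$, $6$, or $7$ distinct points with prescribed cyclic-order patterns — and multiply by the corresponding forest probability $n^{-|E|}\prod_i v_i$.

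The main obstacle is the bookkeeping in the overlapping cases, especially the single-shared-vertex case: once two edges meet at a point, one must enumerate all the ways the relative order of the (up to seven) points can be consistent with both pairs crossing, and for each such ordering type record the resulting component sizes $v_i$ entering $\prod_i v_i$. This is a finite but delicate case analysis, and it is where the non-leading terms of the variance polynomial (the $-\tfrac{3n^2}{40}$, $-\tfrac{17n}{72}$, and so on) are generated; getting the lower-order coefficients exactly right requires treating the boundary configurations (coincidences among the shared and non-shared vertices) without double-counting. I would carry out each class as a polynomial in $n$, sum them, subtract $\mathbb{E}(X_n)^2=\left(\tfrac{(n-1)(n-2)(n-3)}{6n}\right)^2$ using Proposition~\ref{prop:expect}, and collect terms; the leading behavior $n^3/45$ should emerge from the single-shared-edge and single-shared-vertex classes, consistent with the variance claimed in Theorem~\ref{T1}.
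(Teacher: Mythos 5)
Your strategy --- computing $\mathbb{E}(X_n^2)$ by classifying pairs of crossings according to their overlap and applying formula~(\ref{eq:prob}) class by class --- is sound in principle, but it is not the paper's route, and as written it has two genuine gaps. First, your explicit classification is incomplete. In convex position a crossing is determined by a $4$-point subset, so a pair of crossings corresponds to subsets $A,A'$ with $|A\cap A'|\in\{0,1,2,3,4\}$; besides your four classes there are pairs sharing \emph{two} vertices but no edge, e.g.\ $(1,3),(2,4)$ and $(3,5),(4,6)$, whose union has two components of size $3$ and hence probability $9n^{-4}$, and pairs sharing \emph{three} vertices but no edge, e.g.\ $(1,3),(2,4)$ and $(1,4),(2,5)$, whose union is a single $5$-vertex tree with probability $5n^{-4}$. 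These classes occur $\Theta(n^6)$ and $\Theta(n^5)$ times respectively, so they contribute at orders $n^2$ and $n$: omitting them gives wrong values for exactly the lower-order coefficients ($-\frac{3n^2}{40}$, $-\frac{17n}{72}$, \dots) that the proposition asserts, and your passing remark about ``coincidences among the shared and non-shared vertices'' does not repair the list. Second, and more fundamentally, the proposal is a plan rather than a proof: the content of this proposition is a set of exact coefficients, yet no class is ever actually evaluated --- even the leading $\frac{n^3}{45}$ is only said to ``emerge'' from the shared-edge and shared-vertex classes, not derived.

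For comparison, the paper deliberately sidesteps this case analysis, calling it tedious. It observes instead that every overlap class contributes a Laurent polynomial in $n$ with exponents between $-4$ and $4$ (a constant number of point-placement patterns, each chosen in polynomially many ways of degree at most $8$, times a probability from~(\ref{eq:prob}) that is a constant multiple of a negative power of $n$), so that $\mathbb{E}(X_n^2)=\sum_{i=-4}^{4}a_i n^i$ for nine unknown constants $a_i$; it then computes $\mathbb{E}(X_n^2)$ exactly for $n\leq 10$ by computer enumeration of all labelled trees (Tables~\ref{tab:crossings} and~\ref{tab:moment}) and solves the resulting linear system. If you want to complete your direct approach, that structural observation is also the cheapest way to certify your bookkeeping: finish the classification, check each class has the required polynomial shape, and verify the total against the tabulated small-$n$ values.
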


\begin{proof}

To calculate the variance, we need the second moment, 

$$\mathbb{E}(X_n^2)=\sum_{\substack{a<c<b<d\\ e<g<f<h}} \indicator{ab} \indicator{cd} \indicator{ef} \indicator{gh}.$$
Note that points of $S$ which give rise to the indicator variables $\indicator{ab}$ and $\indicator{cd}$ might also appear in the indicator variables 
$\indicator{ef}$  or $\indicator{gh}$. This repetition of points leads to different cases of possible crossing configurations, 
depicted in Figure~\ref{fig:treecases}. The configuration shown on the top left in the figure corresponds to the case when $a,b,c,d,e,f,g,h$ are all different points. Such a configuration appears $\frac{ {{n}\choose{4}}{{n-4}\choose{4}}  }{2}$ times (choose four points for the first crossing, another four points for the second crossing, and then we counted each pair of crossing edges twice). By~(\ref{eq:prob}), the probability of this crossing configuration is $n^{-4}2^4$. A similar but a bit tedious argumentation can be done for the other cases.   
Instead of going through all the possible cases, we observe that $\mathbb{E}(X_n^2)$ has the form 
$\mathbb{E}(X_n^2)=\sum_{i=-4}^{4} a_i n^i$, for some values $a_i$ which we have to determine. 
Indeed, for the different crossing configurations, between five and eight points from $S$ are chosen, for each such choice there is some constant number of products of indicators $\indicator{ab} \indicator{cd} \indicator{ef} \indicator{gh}$ with $a<c<b<d$ and  $e<g<f<h$; further, by~(\ref{eq:prob}), $\mathbb{P}(\indicator{ab} \indicator{cd} \indicator{ef} \indicator{gh}=1)$ is $n^{-4}$ or $n^{-3}$, multiplied with some constant.  
With the aid of a computer we calculated the second moment of 
$X_n$ for the first ten values of $n$, see Tables~\ref{tab:crossings} and~\ref{tab:moment}.  
Using these first nine values of $\mathbb{E}(X_n^2)$ we get a linear system with nine variables $a_i$. Its solution gives the following formula for $\mathbb{E}(X_n^2)$.
$$\mathbb{E}(X_n^2)=\frac{n^4}{36}-\frac{14n^3}{45}+\frac{553n^2}{360}-\frac{305n}{72}+\frac{491}{72}-\frac{2323}{360n}+\frac{217}{60n^2}-\frac{1}{n^3}.$$
Then, the result follows from $Var(X_n)= \mathbb{E}(X_n^2)-\mathbb{E}(X_n)^2$.\end{proof}

\begin{figure}[h]
	\centering
		\includegraphics[scale=0.5]{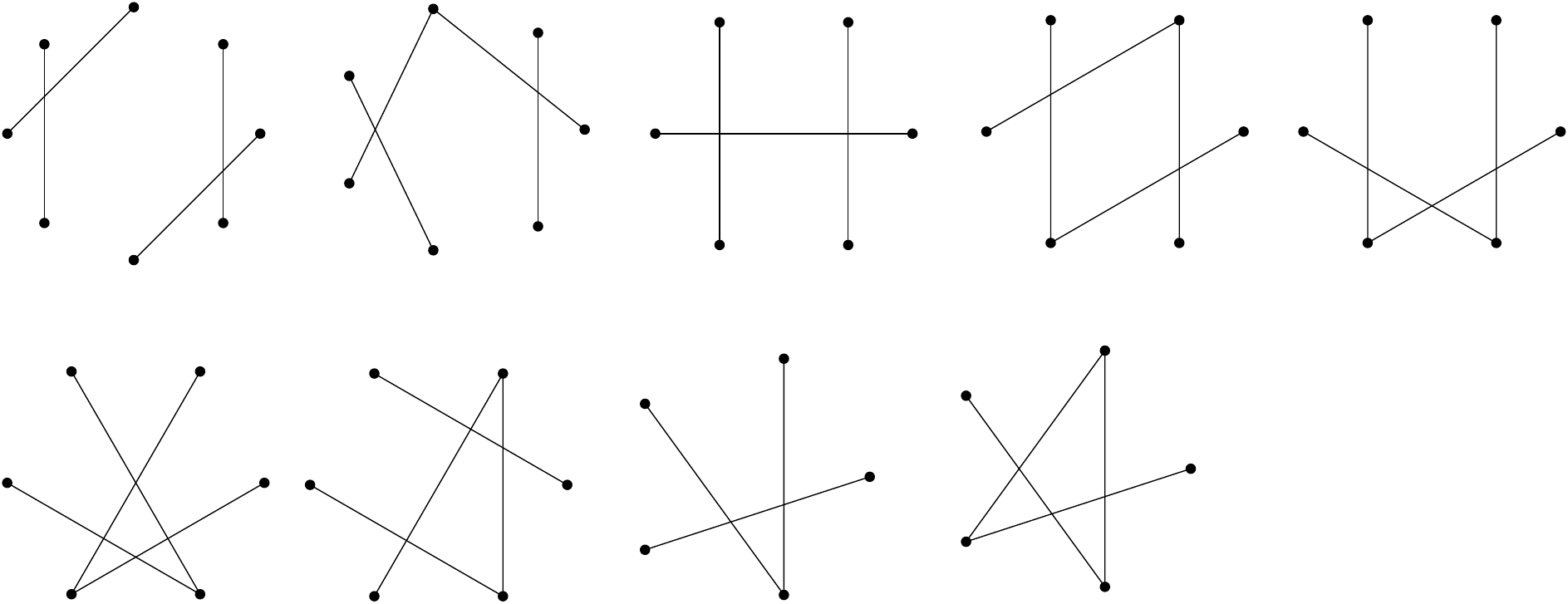}
	\caption{The possible crossing configurations for the case analysis to calculate $\mathbb{E}(X_n^2)$.}
	\label{fig:treecases}
\end{figure}


Notice that, when $n\to\infty$, $E(X_n)\sim n^2/6$ while $Var(X_n)\sim n^3/45.$

\subsection{Proof of Theorem \ref{T1}}

Now we are able to prove the main theorem of the paper. That is, we will show that, as $n \rightarrow \infty,$ 
$$\frac{X_n-\mu_n}{\sigma_n} \rightarrow N(0,1),$$ in distribution, where $\mu_n=E(X_n)$ and $\sigma_n = \sqrt{Var{(X_n)}}.$ To do this, it is sufficient to show that for $k \geq 3,$ $C_k\left(\frac{X_n-\mu}{\sigma}\right) \rightarrow 0$ as $n \rightarrow \infty.$

From Properties~(\ref{eq:prop1}) and~(\ref{eq:prop2}), it is sufficient to show that $\frac{C_k(X_n)}{\sigma^k}  \rightarrow 0$ as $n \rightarrow \infty.$
Since $\sigma = n^{3/2}+o(n^{3/2})$, it is sufficient to show that $C_k(X_n) \in o(n^{3k/2}),$  for $k \geq 3.$ This is our aim. \\

We have from \eqref{sum of indicators}, that $X_n= \sum_{a<c<b<d} \indicator{ab}\indicator{cd}$. To simplify notation, by relabelling, we denote the products of random variables $\indicator{ab}\indicator{cd}$ as $Y_i$, for $i=1,2,\ldots,m={{n}\choose{4}}$.  In this way,
$$C_k(X_n)=C_k\left(\sum_{a<c<b<d} \indicator{ab}\indicator{cd}\right) = C_k\left(\sum_{i=1}^{m} Y_i\right).$$

 Using  multilinearity of cumulants, i.e. property (\ref{eq:prop3}),
\begin{equation}\label{prop:joint}
C_k(X_n)=C_k\left(\sum_{i=1}^{m} Y_i, \sum_{i=1}^{m} Y_i, \ldots,\sum_{i=1}^{m} Y_i \right)= \sum_{\substack{i_j \in \{1,\ldots,m\}\\ \mbox{for} \ j \in \{1,\ldots,k\}}} C_k\left(Y_{i_1},Y_{i_2}, \ldots, Y_{i_k}\right).
\end{equation}

We will analyze which summands are not equal to $0$ in the last formula. For this we use the following notation:

\begin{enumerate}
\item For each $i\in\{1,\dots,m\}$ we denote by $ab(i)$ and $cd(i)$, the edges that correspond to the indicator $Y_i$, (i.e. $Y_i=\indicator{ab(i)} \indicator{cd(i)}$) and define $W_i:=ab(i)\cup cd(i)$.

\item For a set of indices $\mathbf{i}:=(i_1,...,i_k)$ we consider the graph $G(\mathbf{i})=G(i_1,...,i_k)$, which is defined by the union of all the edges $ab(i_r)$ and $cd(i_r)$, for $r=1,...,k$.  That is, $G(\mathbf{i})=\cup_{\ell=1}^{k} W_{i_\ell}.$
\end{enumerate}

 Notice that from Property~(\ref{eq:prop4}) we have that $C_k\left(Y_{i_1},Y_{i_2}, \ldots, Y_{i_k}\right)=0$ if two of the random variables $Y_{i_1},Y_{i_2}, \ldots, Y_{i_k}$ are independent.  From Corollary~\ref{cor:indep} we know that two such random variables $Y_{i_j}=\indicator{ab}\indicator{cd}$ and $Y_{i_\ell}=\indicator{ef}\indicator{gh}$ are independent if $a,b,c,d,e,f,g,h$ are eight different points of $S$. Therefore, $C_k\left(Y_{i_1},Y_{i_2}, \ldots, Y_{i_k}\right) \neq 0$ only if \textbf{any} two of $W_{i_1},W_{i_2}, \ldots, W_{i_k}$ share at least one point of $S$.

~
 \\
We divide the rest of the proof in 3 steps.\\
~
\\
\textbf{Step 1.} Bounds on the number of edges $|E(G(\mathbf{i}))|$ and vertices $|V(G(\mathbf{i}))|$ in the graph $G(\mathbf{i})$.

Consider a set of indices $\mathbf{i}:=(i_1,...,i_k)$ for which  $C_k\left(Y_{i_1},Y_{i_2}, \ldots, Y_{i_k}\right) \neq 0$. $|E(G(\mathbf{i}))|$ is at most $2k$ because each $W_{i_j}$ contributes with at most two edges to $G(\mathbf{i})$.

$|V(G(\mathbf{i}))|$ is at most $3k+1$, because we can draw $G(\mathbf{i})$ by first drawing the four vertices and two edges, $a(i_1)$ and $b(i_1)$ and  then successively adding the vertices and edges of the other $W_j$, for $j=2,\ldots, k$ to the drawing; since each $W_{i_j}$ share at least one vertex with the subgraph defined by $\cup_{\ell=1}^{j-1} W_{i_\ell}$, at most three new vertices are added to the drawing for each $j=2,\ldots, k$.

We will also need to bound $|V(G(\mathbf{i}))|-|E(G(\mathbf{i}))|$. Using the same argument, it is easy to see that $|V(G(\mathbf{i}))|-|E(G(\mathbf{i}))| \leq k+1$; the contribution of the four vertices and two edges of $W_{i_1}$ to $|V(G(\mathbf{i}))|-|E(G(\mathbf{i}))|$ is $2$, and the contribution of each further $W_{i_j}$, for $j=2,\ldots,k$, is at most $1$.\\
~
\\
\textbf{Step 2.} Bound for the joint cumulants of $Y_i$ for a graph $G(\mathbf{i})$.

Recall that joint cumulants can be expressed in terms of joint moments in the following form:
$$C_k\left(Y_{i_1},Y_{i_2}, \ldots, Y_{i_k}\right) = \sum_{\pi}(|\pi|-1)!(-1)^{|\pi|-1}\prod_{B \in \pi} \mathbb{E}\left(\prod_{i_j \in B} Y_{i_j}  \right),$$
where $\pi$ runs through the list of all partitions of $\{1,2,\ldots,k\}$, $B$ runs through the list of all blocks of the partition $\pi$, and $|\pi|$ is the number of parts in the partition.\\

Since $\prod_{i_j \in B} Y_{i_j}$ is a product of indicator variables, using~(\ref{eq:prob}), we have
$$\mathbb{E}\left(\prod_{i_j \in B} Y_{i_j}  \right) =\mathbb{P}\left(\prod_{i_j \in B} Y_{i_j}  \right)=n^{-|E(G(\mathbf{i}))|}|V(G(\mathbf{i}))|,$$
Note that $|V(G(\mathbf{i}))|$ only depends on $k$, and also the number of partitions $\pi$ only depends on $k$. Therefore, there exists a function $f(k)$ such that
$$C_k\left(Y_{i_1},Y_{i_2}, \ldots, Y_{i_k}\right) \leq f(k)n^{-|E(G(\mathbf{i}))|}.$$
\\
\textbf{Step 3.} Bound for the sum in Equation~(\ref{prop:joint}).

We partition all the cumulants $C_k\left(Y_{i_1},Y_{i_2}, \ldots, Y_{i_k}\right)$ into classes according to the number $|V(G(\mathbf{i}))|$ of points of $S$ which appear in $\cup_{j=1}^{k}W_{i_j}$. 

There are at most $g(k)n^{ |V(G(\mathbf{i}))|}$ tuples $\left(i_1,i_2, \ldots, i_k\right)$ with $|V(G(\mathbf{i}))|$ points, for some function $g(k)$. 
To see this, notice that a tuple is determined by the set of vertices $V(G(\mathbf{i}))$ and a collection of subgraphs of size $4$, $W_1,...., W_k$ such that $\cup_j W_{j}=G(\mathbf{i}).$
 For the vertex set $V(G(\mathbf{i}))$ there are  ${{n}\choose{ |V(G(\mathbf{i}))|} } \leq n^{ |V(G(\mathbf{i}))|}$ 
 possibilities.  Since $|V(G(\mathbf{i}))| \leq 3k+1$, the number of choices for  $W_1,...., W_k$ is at most $g(k)={{3k+1}\choose{4}}^{ k}$.

Finally,
$$ \sum_{\substack{i_j \in \{1,\ldots,m\}\\ \mbox{for} \ j \in \{1,\ldots,k\}}} C_k\left(Y_{i_1},Y_{i_2}, \ldots, Y_{i_k}\right)
\leq \sum_{|V(G(\mathbf{i}))|=1}^{3k+1}\sum_{\substack{\mathbf{i}=(i_1,...,i_k) \in [m]^k}} C_k\left(Y_{i_1},Y_{i_2}, \ldots, Y_{i_k}\right)$$
$$\leq (3k+1)g(k)n^{|V(G(\mathbf{i}))|}f(k)n^{-|E(G(\mathbf{i}))|} \leq (3k+1)f(k)g(k)n^{k+1}.$$
Thus, $C_k(X_n) \in O(n^{k+1})$ and  $\lim_{n\rightarrow \infty} \frac{C_k(X_n)}{\sigma^k} =0$, as we wanted to show.

\section{Point sets in general position}

A set $S$ of points in the plane is in general position if no three points of $S$ lie on a common line. 
We show here that our result on the number of crossings in random trees drawn on point sets in convex position extends to random trees drawn on point sets in general position. As we consider a limiting process when the number $n$ of points tends towards infinity, we need to specify a sequence of point sets $\{S_n\}_{n=1}^{\infty}$, with $|S_n|=n$, that satisfies a certain structure. The structure of a point set is often encoded  by the 
{\it{order type}}. 

Recall that the \emph{rectilinear crossing number} of a graph $G$, first introduced  by Haray and Hill~\cite{harary1963number} and  denoted $\overline{{cr}}(G)$, is the minimum number of crossings in any drawing of  $G$ such that its edges are represented by straight line segments. For a given set $S_n$ of $n$ points, we say that the rectilinear crossing number of 
$S_n$, denoted by $\overline{{cr}}(S_n)$, is the number of edge crossings of the complete graph $K_n$, when drawn with vertex set the point set $S_n$ and edges drawn as straight segments. Equivalently, the rectilinear crossing number of $S_n$ is the number of convex quadrilaterals with vertices in $S_n$. It is known that
 $0.37997{{n} \choose {4}}+ {O}(n^3) \leq \overline{{cr}}(S_n) \leq {{n} \choose {4}}$ for any set $S_n$ of $n$ points, we refer to~\cite{AFS}. We consider 
$\lim_{n \rightarrow \infty} \frac{\overline{{cr}}(S_n)}{{{n} \choose {4}}}$, which might not exist for every sequence $\{S_n\}$.  Note that if $\{S_n\}$ describes a sequence of point sets in convex position, then this limit is equal to $1$. It is also known that this limit exists for the sequence $\{S_n\}$ in which  $S_n$ minimizes $\overline{{cr}}(S)$ among all sets $S$ of $n$ points, commonly denoted as $\overline{{cr}}(K_n)$, and this limit is closely related to Sylvester's four point problem~\cite{SW94}.

We show first that the expected number of crossings of a tree drawn at random on a set $S_n$ of $n$ points only depends on $\overline{{cr}}(S_n)$.
Recall that $X_n$ is the random variable that counts the number of crossings in a random spanning tree of $S_n$. 

\begin{proposition}\label{prop:expectgeneral}
\begin{align*} \mathbb{E}(X_n) = \frac{4 \overline{{cr}}(S_n)}{n^2}\end{align*}
\end{proposition}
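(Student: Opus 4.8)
The plan is to follow the same strategy as in the proof of Proposition~\ref{prop:expect}, writing $X_n$ as a sum of indicator products and applying linearity of expectation, while observing that the only feature distinguishing the general-position case from the convex case is \emph{which} four-point subsets give rise to a crossing.

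First I would express $X_n$ as a sum over crossing configurations. For a set $S_n$ in general position, a pair of straight segments $(a,b)$ and $(c,d)$ with $a,b,c,d$ distinct cross if and only if the four points are in convex position and $(a,b)$, $(c,d)$ are the two diagonals of the resulting convex quadrilateral. Hence
$$X_n = \sum_{\substack{\{(a,b),(c,d)\}\\ \text{segments cross}}} \indicator{ab}\,\indicator{cd},$$
where the sum runs over unordered pairs of crossing segments.

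Next I would compute the expectation of each summand. Since two crossing segments necessarily share no vertex, the pair $\{(a,b),(c,d)\}$ is a forest with two subtrees, each having two vertices, and a total of two edges. By~(\ref{eq:prob}) with $|E|=2$, $p=2$ and $v_1=v_2=2$,
$$\mathbb{E}(\indicator{ab}\indicator{cd}) = \mathbb{P}(\indicator{ab}\indicator{cd}=1) = n^{-2}\cdot 2 \cdot 2 = \frac{4}{n^2}.$$
The crucial point is that this probability depends only on the combinatorial structure of the forest and is therefore \emph{independent} of the geometric positions of the points; every crossing configuration contributes the same value $4/n^2$.

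Finally I would count the crossing configurations. By the characterization above, the crossing pairs of segments are in bijection with the convex quadrilaterals spanned by $S_n$, and the number of such quadrilaterals is exactly $\overline{{cr}}(S_n)$, as recalled before the statement. Combining this count with linearity of expectation yields
$$\mathbb{E}(X_n) = \sum_{\substack{\{(a,b),(c,d)\}\\ \text{segments cross}}} \frac{4}{n^2} = \frac{4\,\overline{{cr}}(S_n)}{n^2}.$$
There is no serious obstacle here: the entire content lies in recognizing that the probability $4/n^2$ is purely combinatorial and hence unaffected by the embedding, so that passing from convex to general position merely replaces the total number ${n \choose 4}$ of four-point subsets by the number $\overline{{cr}}(S_n)$ of those that are convex.
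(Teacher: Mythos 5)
Your proof is correct and follows essentially the same route as the paper's: both reduce to the argument of Proposition~\ref{prop:expect}, using that the probability $4/n^2$ from formula~(\ref{eq:prob}) is purely combinatorial (position-independent) and that the crossing configurations are exactly the $\overline{{cr}}(S_n)$ convex quadrilaterals, then apply linearity of expectation. Your write-up merely makes explicit the details (the bijection with convex quadrilaterals and the parameters $|E|=2$, $v_1=v_2=2$ in~(\ref{eq:prob})) that the paper leaves implicit by citing Proposition~\ref{prop:expect}.
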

\begin{proof}
The proof is almost identical to the one of Proposition~\ref{prop:expect}; the difference is that instead of all possible ${n \choose 4}$ edge crossings, we now have $\overline{{cr}}(S_n)$ many. Note that the probability of a given edge crossing to appear in a random tree is $\frac{4}{n^2}$, because all the statements of Section~\ref{sec:count} are invariant of the precise position of the points and of edge crossings, it only matters which points are connected. Hence, formula~(\ref{eq:prob}) applies.
\end{proof}

Let us remark that the variance $Var(X_n)$ does not only depend on $\overline{{cr}}(S_n)$. However, we show in the following that $Var(X_n)$ is of order $O(n^3)$, instead of the theoretically possible $O(n^4)$, which allows us to follow the argumentation of Section~\ref{sec:cumulants}. 

\begin{proposition}
$Var(X_n)$ is in $O(n^3)$. 
\end{proposition}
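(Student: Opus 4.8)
The plan is to recognize that the variance is exactly the second cumulant, $Var(X_n) = C_2(X_n)$, and then to rerun the three-step counting argument of Section~\ref{sec:cumulants} with $k=2$. Writing $X_n = \sum_i Y_i$, where now the index $i$ ranges over the $\overline{{cr}}(S_n)$ actual crossings of the drawing on $S_n$ (rather than over all $\binom{n}{4}$ four-tuples), multilinearity~(\ref{eq:prop3}) gives $C_2(X_n) = \sum_{i,j} C_2(Y_i,Y_j)$, and for each pair $C_2(Y_i,Y_j) = \mathbb{E}(Y_iY_j) - \mathbb{E}(Y_i)\mathbb{E}(Y_j)$ is just the covariance of the two indicators.

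First I would discard the vanishing terms. By the vanishing of mixed cumulants~(\ref{eq:prop4}) together with Corollary~\ref{cor:indep}, $C_2(Y_i,Y_j) = 0$ whenever the vertex sets $W_i$ and $W_j$ are disjoint. This is the crucial point: the pairs of \emph{vertex-disjoint} crossings, of which there are $O(n^8)$ and which are alone responsible for the full $n^4$ order of $\mathbb{E}(X_n^2)$, contribute $0$ to the covariance. Equivalently, the leading $n^4$ terms of $\mathbb{E}(X_n^2)$ and of $\mathbb{E}(X_n)^2$ cancel, which is precisely the cancellation that brings the a priori bound $O(n^4)$ down to $O(n^3)$. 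Working with the cumulant rather than the raw second moment makes this cancellation automatic instead of something to be extracted by hand.

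Next I would bound the surviving terms by the counting of Steps~1--3 of Section~\ref{sec:cumulants}, specialized to $k=2$. For a pair $\mathbf{i}=(i,j)$ with $C_2(Y_i,Y_j)\neq 0$ the graph $G(\mathbf{i}) = W_i \cup W_j$ satisfies $|V(G(\mathbf{i}))| - |E(G(\mathbf{i}))| \leq k+1 = 3$. By~(\ref{eq:prob}) each covariance is bounded in absolute value by $f(2)\,n^{-|E(G(\mathbf{i}))|}$ for a constant $f(2)$, and the number of pairs whose union has a prescribed vertex set of size $|V(G(\mathbf{i}))|$ is at most $g(2)\,n^{|V(G(\mathbf{i}))|}$ for a constant $g(2)$. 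Summing over the at most $3k+1 = 7$ possible values of $|V(G(\mathbf{i}))|$ then yields $C_2(X_n) \leq 7\,f(2)\,g(2)\,n^{\,|V(G(\mathbf{i}))|-|E(G(\mathbf{i}))|} \leq 7\,f(2)\,g(2)\,n^{3}$, that is, $Var(X_n) \in O(n^3)$. (One could alternatively confirm this by a direct case analysis, splitting pairs according to the number $s\geq 1$ of shared points and whether an edge is shared; the dominant contribution comes from $s=1$ with distinct edges and is $O(n^{7})\cdot O(n^{-4}) = O(n^3)$.)

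The main thing to verify, and the only place where I expect any genuine (and purely expository) difficulty, is that this argument is insensitive to the passage from convex to general position. Every ingredient above depends only on the combinatorics of which crossings share points and on the probability formula~(\ref{eq:prob}), and the latter is, as noted in the proof of Proposition~\ref{prop:expectgeneral}, independent of the actual coordinates of $S_n$: it only matters which points are joined by edges. Moreover the index set of crossings is now a \emph{subset} of all four-tuples, so the counting upper bounds of Step~3 can only decrease. Hence no step of the case analysis needs to be altered, and the $k=2$ instance of the machinery of Section~\ref{sec:cumulants} delivers the variance bound without any fresh computation.
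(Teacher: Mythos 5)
Your proof is correct, but it takes a genuinely different route from the paper's. The paper proves this proposition by a direct second-moment computation: it writes $\mathbb{E}(X_n^2)$ as the sum over pairs of crossings, observes that the pairs involving eight distinct points have probability exactly $16n^{-4}$ and number $(\overline{{cr}}(S_n))^2 - O(n^7)$, while the pairs sharing at least one point number only $O(n^7)$, and concludes $\mathbb{E}(X_n^2)=\frac{16(\overline{{cr}}(S_n))^2}{n^4}+O(n^3)$; subtracting $\mathbb{E}(X_n)^2$ (known exactly from Proposition~\ref{prop:expectgeneral}) then gives the bound, so the cancellation of the order-$n^4$ terms is tracked by hand. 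You instead identify $Var(X_n)$ with the second cumulant $C_2(X_n)$ and rerun the machinery of Section~\ref{sec:cumulants} at $k=2$: multilinearity~(\ref{eq:prop3}) decomposes the variance into covariances, Corollary~\ref{cor:indep} together with~(\ref{eq:prop4}) kills all vertex-disjoint pairs (making the leading-order cancellation automatic rather than explicit), and the Step~1--3 counting with $|V(G(\mathbf{i}))|-|E(G(\mathbf{i}))|\leq 3$ bounds the surviving $O(n^7)$ pairs by $O(n^3)$. Your approach buys uniformity --- the variance bound becomes literally the $k=2$ instance of the argument used for all higher cumulants, and no leading term of $\mathbb{E}(X_n^2)$ ever needs to be computed --- whereas the paper's approach is more elementary (no cumulant formalism needed at this point of Section~4) and yields slightly more explicit information, namely the asymptotic form of the second moment itself. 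Both proofs ultimately rest on the same two ingredients: the position-independence of formula~(\ref{eq:prob}) and the fact that pairs of crossings sharing a point number $O(n^7)$, each contributing $O(n^{-|E(G(\mathbf{i}))|})$ with $|V(G(\mathbf{i}))|-|E(G(\mathbf{i}))|\leq 3$. One cosmetic remark: your final display reuses the symbols $|V(G(\mathbf{i}))|$ and $|E(G(\mathbf{i}))|$ as if they were constant across the class being summed, which is the same notational looseness present in the paper's own Step~3; it is harmless because the bound $n^{|V(G(\mathbf{i}))|-|E(G(\mathbf{i}))|}\leq n^3$ holds term by term.
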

\begin{proof}
It is sufficient to show that $\mathbb{E}(X_n^2)-\frac{16(\overline{{cr}}(S))^2}{n^4}$ is in $O(n^3)$, using $Var(X_n)=\mathbb{E}(X_n^2)-\mathbb{E}(X_n)^2.$
Define indicator random variables as in Section~\ref{sec:expect}. Then,
$$\mathbb{E}(X_n^2)=\sum_{\substack{a<c<b<d \\ e<g<f<h}} \indicator{ab} \indicator{cd} \indicator{ef} \indicator{gh}.$$
Again, points of $S$ which give rise to the indicator variables $\indicator{ab}$ and $\indicator{cd}$ might also appear in the indicator variables 
$\indicator{ef}$  or $\indicator{gh}$. This repetition of points leads to different cases of possible crossing configurations, 
which depends on the numbers of different sub-order types of $5$, $6$, $7$, and $8$ points. While it seems infeasible to come up with a precise case analysis, we only need to consider products $\indicator{ab} \indicator{cd} \indicator{ef} \indicator{gh}$ with  $a<c<b<d$ and $e<g<f<h$ where 
 $a,b,c,d,e,f,g,h$ are eight different points. Indeed, the number of crossing configurations which involve at most seven points can be upper bounded by ${{n}\choose{7}}$ multiplied with some constant. Thus, 
$$\sum_{\substack{a<c<b<d \\ e<g<f<h}} \indicator{ab} \indicator{cd} \indicator{ef} \indicator{gh}=(cr(S_n))^2-\Theta(n^7).$$
Also, when there are eight different points, by equation~\ref{eq:prob} we have that $\mathbb{P}(\indicator{ab} \indicator{cd} \indicator{ef} \indicator{gh}=1) = 16n^{-4}$. 
Therefore,  $\mathbb{E}(X_n^2)=\frac{16(\overline{{cr}}(S))^2}{n^4}+O(n^3)$, as claimed.\\
\end{proof}

Finally, note that as in Section~\ref{sec:count},  the precise positions of the points of $S$ or edge crossings are not relevant for the arguments of Section~\ref{sec:cumulants}, only the connected components matter. Hence, the results of Section~\ref{sec:cumulants} also apply in this setting.

\subparagraph*{Acknowledgments.} We thank  David Flores and Vincent Pilaud for useful discussions.


\bibliographystyle{plain}
\bibliography{biblio}

\section*{Appendix}

\begin{table}[h]
\begin{tabular}{l|l}
n & \\
\hline
1 & 1\\
2 & 1\\
3 & 3\\
4 & 12, 4\\
5 & 55, 45, 20, 5\\
6 & 273, 378, 321, 204, 78, 36, 6\\
7 & 1428, 2856, 3535, 3430, 2415, 1659, 847, 385, 203, 42, 7\\
8 & 7752, 20520, 33216, 42408, 41936, 38192, 29048, 20280, 13696, 7752, \
4048, 2016, 960, 248, 64, 8\\
9 & 43263, 143451, 286308, 448371, 560124, 629019, 613413, 549162, 462285, 356193, 257121, 176040, 115740,\\
 & 67563, 38538, 19863, 10323, 4275, 1386, 450, 72, 9\\
10 & 246675, 986700, 2339450, 4314890, 6440875, 8531520, 9974515, 10686500, 10686395, 9966550, 8771495, \\
& 7339860, 5890895, 4463120, 3265750, 2269070, 1534005, 982890, 592545, 345720, 190395, 100350, \\
& 49115, 20040, 7480, 2570, 520, 100, 10\\
\end{tabular}
\caption{number of spanning trees on $n \leq 10$ points with  $k$ edge crossings, in increasing order of $k=0,1,2,\ldots$}
\label{tab:crossings}
\end{table}

\begin{table}[h]
\begin{tabular}{l|l l l l l l l l l l}
$n$ & 1& 2 &3&4& 5& 6& 7& 8& 9&10 \\ 
\hline
$\mathbb{E}(X)$& 0 &  0 & 0 & 1/4 &4/5 & 5/3 & 20/7& 35/8 & 56/7& 42/5 \\
$\mathbb{E}(X_n^2)$& 0 &  0 & 0 & 1/4 &34/25 & 977/216 & 3968/343 & 12789/512 & 34916/729 & 42063/500\\
\end{tabular}
\caption{The first two moments of $X_n$ for $n \leq 10$}
\label{tab:moment}
\end{table}
 \textsc{Octavio Arizmendi: Centro de Investigaci\'on en Matem\'aticas, Apdo. Postal 402,
Guanajuato, Gto. 36000, Mexico;  Pilar Cano: School of Computer Science, Carleton University, 1125 Colonel By Dr, Ottawa, ON K1S 5B6, Canada; Pilar Cano and Clemens Huemer: Universitat Polit\'ecnica de Catalunya, Jordi Girona 1-3,
08038 Barcelona, Spain\\}
\emph{E-mail address:} \texttt{octavius@cimat.mx, m.pilar.cano@upc.edu, clemens.huemer@upc.edu}

\end{document}